\newtheorem{theorem}{Theorem}[section]
\newtheorem{lemma}[theorem]{Lemma}
\theoremstyle{definition}
\newtheorem{definition}[theorem]{Definition}
\newtheorem{Open Prob}[theorem]{Open Problem}
\theoremstyle{remark}
\numberwithin{equation}{section}
\def\DJ{\leavevmode\setbox0=\hbox{D}\kern0pt\rlap{\kern.04em\raise.188\ht0\hbox{-}}D}
\begin{document}

\title[$p$-Proximal contraction]{A remark on the paper ``A note on the paper Best proximity point results for $p$-proximal contractions"}

\author[S.\ Som]
{Sumit Som}

\address{           Sumit Som,
                    Department of Mathematics,
                    School of Basic and Applied Sciences, Adamas University, Barasat-700126, India.}
                    \email{somkakdwip@gmail.com}

\subjclass {$54H25$, $47H10$}

\keywords{Best proximity point, $p$-proximal contraction, Banach contraction principle}

\begin{abstract}
Recently, In the year 2020, Altun et al. \cite{AL} introduced the notion of $p$-proximal contractions and discussed about best proximity point results for this class of mappings. Then in the year 2021, Gabeleh and Markin \cite{GB} showed that the best proximity point theorem proved by Altun et al. in \cite{AL} follows from the fixed point theory. In this short note, we show that if the $p$-proximal contraction constant $k<\frac{1}{3}$ then the existence of best proximity point for $p$-proximal contractions follows from the celebrated Banach contraction principle.
\end{abstract}

\maketitle

\section{\bf{Introduction}}
Metric fixed point theory is an essential part of Mathematics as it gives sufficient conditions which will ensure the existence of solutions of the equation $F(x)=x$ where $F$ is a self mapping defined on a metric space $(M,d).$ Banach contraction principle for standard metric spaces is one of the important results in metric fixed point theory and it has lot of applications. Let $A,B$ be non-empty subsets of a metric space $(M,d)$ and $Q:A\rightarrow B$ be a non-self mapping. A necessary condition, to guarantee the existence of solutions of the equation $Qx=x,$ is $Q(A)\cap A\neq \phi.$ If $Q(A)\cap A= \phi$ then the mapping $Q$ has no fixed points. In this case, one seek for an element in the domain space whose distance from its image is minimum i.e, one interesting problem is to $\mbox{minimize}~d(x,Qx)$ such that $x\in A.$ Since $d(x,Qx)\geq d(A,B)=\inf~\{d(x,y):x\in A, y\in B\},$ so, one search for an element $x\in A$ such that $d(x,Qx)= d(A,B).$ Best proximity point problems deal with this situation. Authors usually discover best proximity point theorems to generalize fixed point theorems in metric spaces. Recently, In the year 2020, Altun et al. \cite{AL} introduced the notion of $p$-proximal contractions and discussed about best proximity point results for this class of mappings. Then, in the year 2021, Gabeleh and Markin \cite{GB} showed that the best proximity point theorem proved by Altun et al. in \cite{AL} follows from a result in fixed point theory. In this short note, we show that if the $p$-proximal contraction constant $k<\frac{1}{3}$ then the existence of best proximity point for $p$-proximal contractions follows from the Banach contraction principle.

\section{\bf{Main results}}

We first recall the following definition of $p$-proximal contraction from \cite{AL}.

\begin{definition}\cite{AL}
Let $(A, B)$ be a pair of nonempty subsets of a metric space $(M,d).$ A mapping $f:A\rightarrow B$ is said to be a $p$-proximal contraction if there exists $k\in (0,1)$ such that   \[
\begin{rcases}
 d(u_1, f(x_1))= d(A, B)\\
 d(u_2, f(x_2))= d(A, B)
 \end{rcases}
  {\Longrightarrow d(u_1,u_2)\leq k \Big(d(x_1,x_2)+|d(u_1,x_1)-d(u_2,x_2)|\Big)}
  \] for all $u_1, u_2, x_1, x_2 \in A,$ where $d(A,B) =  \inf\Big\{d(x, y): x\in A,\ y\in B\Big\}.$
\end{definition}
In this paper, we call the constant $k$ in the above definition as $p$-proximal contraction constant. The following notations will be needed.
Let $(M,d)$ be a metric space and $A,B$ be nonempty subsets of $M.$ Then
$$A_0=\{x\in A: d(x,y)=d(A,B)~\mbox{for some}~y\in B\}.$$
$$B_0=\{y\in B: d(x,y)=d(A,B)~\mbox{for some}~x\in A\}.$$

\begin{definition}\cite{BS}
Let $(M,d)$ be a metric space and $A,B$ be two non-empty subsets of $M.$ Then $B$ is said to be approximatively compact with respect to $A$ if for every sequence $\{y_n\}$ of $B$ satisfying $d(x,y_n)\rightarrow d(x,B)$ as $n\rightarrow \infty$ for some $x\in A$ has a convergent subsequence.
\end{definition}

We need the following lemma from \cite{FE}.
\begin{lemma}\cite[\, proposition 3.3]{FE}\label{b}
Let $(A,B)$ be a nonempty and closed pair of subsets of a metric space $(X,d)$ such that $B$ is approximatively compact with respect to $A.$ Then $A_0$ is closed.
\end{lemma}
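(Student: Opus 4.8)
The plan is to show that $A_0$ is \emph{sequentially} closed, which in a metric space is the same as being closed. So I would start with an arbitrary sequence $\{x_n\}\subseteq A_0$ with $x_n\to x$ in $X$; since $A$ is closed we already have $x\in A$, and the whole task reduces to producing a point $y\in B$ with $d(x,y)=d(A,B)$.

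First I would unpack membership in $A_0$: for each $n$ choose $y_n\in B$ with $d(x_n,y_n)=d(A,B)$. Using $x\in A$ and the triangle inequality,
\[
d(A,B)\le d(x,B)\le d(x,y_n)\le d(x,x_n)+d(x_n,y_n)=d(x,x_n)+d(A,B).
\]
Since $d(x,x_n)\to 0$, the two outer quantities both tend to $d(A,B)$, so simultaneously $d(x,B)=d(A,B)$ and $d(x,y_n)\to d(x,B)$.

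The second step is to invoke the hypothesis: $B$ is approximatively compact with respect to $A$ and $d(x,y_n)\to d(x,B)$ for the point $x\in A$, hence $\{y_n\}$ admits a subsequence $\{y_{n_j}\}$ converging to some $y$; closedness of $B$ gives $y\in B$. Letting $j\to\infty$ in $d(x,y_{n_j})\to d(x,B)=d(A,B)$ yields $d(x,y)=d(A,B)$, so $x\in A_0$. Thus every convergent sequence in $A_0$ has its limit in $A_0$, i.e.\ $A_0$ is closed.

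I do not expect a genuine difficulty here; the one point deserving a moment's care is the squeezing argument that upgrades ``$d(x_n,y_n)=d(A,B)$'' to ``$d(x,y_n)\to d(x,B)$'', since that convergence is exactly the input the definition of approximative compactness demands. The closedness of $A$ and of $B$ are each used precisely once, for the limits of $\{x_n\}$ and of $\{y_{n_j}\}$ respectively.
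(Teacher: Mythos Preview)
Your argument is correct and is exactly the standard proof of this fact: pick witnesses $y_n\in B$ for each $x_n\in A_0$, use the triangle inequality to squeeze $d(x,y_n)\to d(x,B)=d(A,B)$, invoke approximative compactness to extract a convergent subsequence $y_{n_j}\to y\in B$, and conclude $d(x,y)=d(A,B)$. Note, however, that the present paper does \emph{not} supply its own proof of this lemma; it is merely quoted from \cite[Proposition~3.3]{FE} and used later as an input to Theorem~2.6. So there is nothing in the paper to compare your proof against --- your write-up simply fills in the omitted (and cited) argument, and it does so correctly.
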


In \cite{AL}, Altun et al. proved the following best proximity point result.
\begin{theorem}\cite{AL}\label{a}
Let $A,B$ be nonempty and closed subsets of a complete metric space $(M,d)$ such that $A_0$ is nonempty and $B$ is approximatively compact with respect to $A.$ Let $T:A\rightarrow B$ be a $p$-proximal contraction such that $A_0\neq \phi$ and $T(A_0)\subseteq B_0.$ Then $T$ has an unique best proximity point. 
\end{theorem}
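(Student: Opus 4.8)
The plan is to prove Theorem~\ref{a} directly, by running the usual proximal iteration and showing the resulting sequence is Cauchy, rather than passing through the fixed point reduction of \cite{GB}. Fix $x_0\in A_0$. Since $Tx_0\in T(A_0)\subseteq B_0$, the definition of $B_0$ furnishes $x_1\in A_0$ with $d(x_1,Tx_0)=d(A,B)$; iterating this and using $T(A_0)\subseteq B_0$ at each step produces a sequence $\{x_n\}\subseteq A_0$ with $d(x_{n+1},Tx_n)=d(A,B)$ for all $n\ge 0$.

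Writing $a_n=d(x_{n+1},x_n)$, I would feed the two admissible pairs $(x_{n+1},x_n)$ and $(x_n,x_{n-1})$, both of which realize the distance $d(A,B)$ through $T$, into the $p$-proximal contraction inequality to obtain the recursion $a_n\le k\big(a_{n-1}+|a_n-a_{n-1}|\big)$. The crux is to extract geometric decay from this. If $a_n\ge a_{n-1}$ the inequality collapses to $a_n\le k a_n$, which forces $a_n=0$; if $a_n<a_{n-1}$ it rearranges to $a_n\le \tfrac{2k}{1+k}\,a_{n-1}$. Since $\tfrac{2k}{1+k}<1$ exactly when $k<1$, in every case $a_n\le \tfrac{2k}{1+k}\,a_{n-1}$, so $a_n\to 0$ geometrically and $\{x_n\}$ is Cauchy. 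This is precisely why the full range $k\in(0,1)$ is available: after the case split, the awkward absolute-value term still yields a ratio strictly below one for every admissible $k$.

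By completeness $x_n\to x^*$ for some $x^*\in M$, and since $A_0$ is closed by Lemma~\ref{b}, we have $x^*\in A_0$. As $Tx^*\in T(A_0)\subseteq B_0$, choose $u\in A_0$ with $d(u,Tx^*)=d(A,B)$. Applying the contraction to the pairs $(u,x^*)$ and $(x_{n+1},x_n)$ and letting $n\to\infty$, the right-hand side tends to $k\,d(u,x^*)$, so $d(u,x^*)\le k\,d(u,x^*)$ forces $u=x^*$, whence $d(x^*,Tx^*)=d(A,B)$ and $x^*$ is a best proximity point. Uniqueness follows on applying the contraction to two best proximity points $x^*,y^*$, which gives $d(x^*,y^*)\le k\,d(x^*,y^*)$ and hence $x^*=y^*$. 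The main obstacle is the recursion analysis in the second step: the absolute-value term must be resolved by the case distinction, and it is there that the sharp factor $\tfrac{2k}{1+k}$ must be identified and shown to be less than $1$ throughout $(0,1)$.
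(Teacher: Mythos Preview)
Your argument is correct and in fact covers the full range $k\in(0,1)$, but it is not the route this paper takes. The paper does not give a direct proof of Theorem~\ref{a}; its purpose is the opposite, namely to show that when $0<k<\tfrac13$ the theorem is a \emph{consequence} of the Banach contraction principle. To that end the author defines the self-map $S_1:A_0\to A_0$ by $S_1x=y$, where $y$ is the unique point with $d(y,Tx)=d(A,B)$, and estimates the absolute-value term crudely via the triangle inequality $|d(S_1x_1,x_1)-d(S_1x_2,x_2)|\le d(S_1x_1,S_1x_2)+d(x_1,x_2)$, obtaining $d(S_1x_1,S_1x_2)\le \tfrac{2k}{1-k}\,d(x_1,x_2)$. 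This ratio is below $1$ only for $k<\tfrac13$, and Banach's theorem on the complete space $A_0$ then yields the best proximity point as the fixed point of $S_1$. Your direct iteration replaces that crude triangle bound by the case split on the sign of $a_n-a_{n-1}$, which is exactly what produces the sharper ratio $\tfrac{2k}{1+k}$ and lifts the restriction on $k$. What the paper's approach buys is conceptual economy: the best proximity problem is exhibited literally as a Banach fixed point problem, so no new Cauchy argument is needed. What your approach buys is range: you recover the full statement of \cite{AL} for every $k<1$, at the price of redoing a Picard-type convergence proof instead of quoting Banach.
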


In \cite{GB}, Gabeleh and Markin showed that Theorem \ref{a} follows from the following fixed point theorem.
\begin{theorem}\cite{OP}\label{c}
Let $(M,d)$ be a complete metric space and $T:M\rightarrow M$ be a $p$-contraction mapping. Then $T$ has an unique fixed point and for any $x_0\in M,$ the Picard iteration sequence $\{T^{n}(x_0)\}$ converges to the fixed point of $T.$
\end{theorem}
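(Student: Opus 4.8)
The plan is to recover Theorem \ref{c} by running the Picard scheme against the defining inequality of a $p$-contraction, which is the self-map specialization of the $p$-proximal contraction condition (take $A=B=M$, so $d(A,B)=0$ and the requirements $d(u_i,f(x_i))=d(A,B)$ force $u_i=Tx_i$): a map $T:M\to M$ is a $p$-contraction if there is $k\in(0,1)$ with
\[
d(Tx,Ty)\le k\Big(d(x,y)+|d(x,Tx)-d(y,Ty)|\Big)
\]
for all $x,y\in M$. First I would fix $x_0\in M$, set $x_{n+1}=Tx_n$, write $d_n=d(x_n,x_{n+1})$, and apply the inequality to the pair $(x_n,x_{n-1})$ to obtain the scalar recursion
\[
d_n\le k\big(d_{n-1}+|d_n-d_{n-1}|\big).
\]

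The main obstacle is the absolute-value term, which I would resolve by a dichotomy. If $d_n\ge d_{n-1}$, the recursion collapses to $d_n\le k\,d_n$, forcing $d_n=0$ (hence $d_{n-1}=0$, so $x_n$ is already fixed). If instead $d_n<d_{n-1}$, then $|d_n-d_{n-1}|=d_{n-1}-d_n$, and rearranging yields $(1+k)d_n\le 2k\,d_{n-1}$, that is
\[
d_n\le \frac{2k}{1+k}\,d_{n-1}.
\]
Since $k<1$ the ratio $\lambda:=\tfrac{2k}{1+k}$ lies in $(0,1)$, so in every case $d_n\le\lambda\,d_{n-1}$ and therefore $d_n\le\lambda^n d_0$. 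A geometric-series estimate for $d(x_n,x_m)$ then shows that $\{x_n\}$ is Cauchy, and completeness of $M$ produces a limit $x^\ast$.

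It remains to verify that $x^\ast$ is the unique fixed point and that the Picard iterates converge to it. Applying the defining inequality to the pair $(x_n,x^\ast)$ gives
\[
d(x_{n+1},Tx^\ast)\le k\big(d(x_n,x^\ast)+|d_n-d(x^\ast,Tx^\ast)|\big),
\]
and letting $n\to\infty$ (so that $x_{n+1}\to x^\ast$, $d(x_n,x^\ast)\to0$, and $d_n\to0$) produces $d(x^\ast,Tx^\ast)\le k\,d(x^\ast,Tx^\ast)$, which forces $Tx^\ast=x^\ast$ because $k<1$. For uniqueness, if $Tx^\ast=x^\ast$ and $Ty^\ast=y^\ast$, the inequality applied to $(x^\ast,y^\ast)$ has a vanishing absolute-value term and collapses to $d(x^\ast,y^\ast)\le k\,d(x^\ast,y^\ast)$, whence $x^\ast=y^\ast$. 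As the construction begins from an arbitrary $x_0$ and converges to the unique fixed point, the Picard iteration sequence always converges to it, completing the proof.
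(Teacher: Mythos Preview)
The paper does not contain its own proof of Theorem~\ref{c}: this result is quoted from Popescu's preprint \cite{OP} and used only as background (Gabeleh and Markin \cite{GB} derive Theorem~\ref{a} from it). Consequently there is nothing in the present paper to compare your argument against.

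That said, your proof is correct and is essentially the standard approach one would expect for this type of result. The dichotomy on the sign of $d_n-d_{n-1}$ cleanly handles the absolute-value term, the ratio $\lambda=\tfrac{2k}{1+k}$ is the right contraction constant for the iterates, and the passage to the limit and the uniqueness step are both sound. One tiny phrasing point: in the case $d_n\ge d_{n-1}$ you conclude $d_n=0$ and then say ``hence $d_{n-1}=0$''; this is justified precisely because of the assumed inequality $d_{n-1}\le d_n$, which you might make explicit. Otherwise the argument is complete.
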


Now we like to state our main result.
\begin{theorem}
If the $p$-proximal contraction constant $0<k<\frac{1}{3}$ then Theorem \ref{a} follows from the Banach contraction principle.
\end{theorem}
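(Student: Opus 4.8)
The goal is to recover Theorem~\ref{a} from the Banach contraction principle, under the stronger hypothesis $k<\tfrac13$. The natural strategy is the standard "transfer" argument used in best proximity theory: build an operator on $A_0$ whose fixed points are exactly the best proximity points of $T$, and show it is a Banach contraction.

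**Key steps.**

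First I would verify that $A_0$ is a nonempty closed subset of the complete metric space $(M,d)$, hence complete in its own right: nonemptiness is a hypothesis, and closedness is exactly Lemma~\ref{b}, since $(A,B)$ is closed and $B$ is approximatively compact with respect to $A$. Next, since $T(A_0)\subseteq B_0$, for each $x\in A_0$ there is at least one $u\in A_0$ with $d(u,T(x))=d(A,B)$; the $p$-proximal contraction inequality applied with $x_1=x_2$ forces $d(u_1,u_2)\le k\,|d(u_1,x)-d(u_2,x)|\le k\,d(u_1,u_2)$, so $u$ is unique. This defines a single-valued map $g:A_0\to A_0$ by $d(g(x),T(x))=d(A,B)$, and $x$ is a best proximity point of $T$ precisely when $g(x)=x$. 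Then, for $x,y\in A_0$, apply the defining implication with $u_1=g(x),x_1=x,u_2=g(y),x_2=y$ to get
\[
d(g(x),g(y))\le k\bigl(d(x,y)+|d(g(x),x)-d(g(y),y)|\bigr).
\]
Using the triangle-type estimate $|d(g(x),x)-d(g(y),y)|\le d(g(x),g(y))+d(x,y)$, this rearranges to $(1-k)\,d(g(x),g(y))\le 2k\,d(x,y)$, i.e. $d(g(x),g(y))\le \tfrac{2k}{1-k}\,d(x,y)$. The hypothesis $k<\tfrac13$ is exactly what makes $\tfrac{2k}{1-k}<1$, so $g$ is a genuine Banach contraction on the complete space $A_0$. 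Banach's principle then yields a unique fixed point of $g$, which is the unique best proximity point of $T$; uniqueness matches the conclusion of Theorem~\ref{a}.

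**Main obstacle.**

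The only delicate points are the ones I would want to be careful about rather than genuinely hard: ensuring $g$ actually maps into $A_0$ (not merely into $A$) — this is where $T(A_0)\subseteq B_0$ is used, since $d(u,T(x))=d(A,B)$ with $T(x)\in B_0$ forces $u\in A_0$ — and confirming that the absolute-value term is controlled without circularity, which the two-sided triangle bound $|d(g(x),x)-d(g(y),y)|\le d(g(x),g(y))+d(x,y)$ handles cleanly. The well-definedness (single-valuedness) of $g$, shown above via the $x_1=x_2$ case of the contraction condition, is the step most easily overlooked. Everything else is the routine Banach fixed point argument on the complete metric space $A_0$; the role of the strengthened constant $k<\tfrac13$ is solely to guarantee $\tfrac{2k}{1-k}<1$.
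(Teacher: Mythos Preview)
Your proposal is correct and follows essentially the same route as the paper: define the transfer map $g:A_0\to A_0$ via $d(g(x),Tx)=d(A,B)$, verify single-valuedness from the $x_1=x_2$ case of the $p$-proximal inequality, bound $|d(g(x),x)-d(g(y),y)|\le d(g(x),g(y))+d(x,y)$ to obtain the Lipschitz constant $\tfrac{2k}{1-k}$, and apply Banach on the complete space $A_0$ (closed by Lemma~\ref{b}). The paper's argument is identical up to notation ($S_1$ in place of your $g$).
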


\begin{proof}
Let $x\in A_0.$ As $T(A_0)\subseteq B_0,$ so, $T(x)\in B_0.$ This implies there exists $y\in A_0$ such that $d(y,T(x))=d(A,B).$ Now, we will show that $y\in A_0$ is unique. Suppose there exists $y_1,y_2\in A_0$ such that $d(y_1,T(x))=d(A,B)$ and $d(y_2,T(x))=d(A,B).$ Since, $T:A\rightarrow B$ is a $p$-proximal contraction so we have,
$$d(y_1,y_2)\leq k\Big(d(x,x)+|d(y_1,x)-d(y_2,x)|\Big)\leq k d(y_1,y_2)$$
$$\Longrightarrow y_1=y_2.$$
Let $S_1:A_0\rightarrow A_0$ be defined by $S_1(x)=y.$ Now, we will show that $S$ is a contraction mapping. Let $x_1,x_2\in A_0.$ As $d(S_1(x_1),T(x_1))=d(A,B)$ and $d(S_1(x_2),T(x_2))=d(A,B)$ and $T$ is a $p$-proximal contraction so, we have,
$$d(S_1(x_1),S_1(x_2))\leq k\Big(d(x_1,x_2)+|d(S_1(x_1),x_1)-d(S_1(x_2),x_2)|\Big)$$ 
$$\Longrightarrow d(S_1(x_1),S_1(x_2))\leq k\Big(d(x_1,x_2)+ d(S_1(x_1),S_1(x_2))+d(x_1,x_2)|\Big)$$
$$\Longrightarrow d(S_1(x_1),S_1(x_2))\leq \frac{2k}{1-k} d(x_1,x_2).$$
Since $0<k<\frac{1}{3},$ so, $0<\frac{2k}{1-k}<1.$ This shows that $S:A_0\rightarrow A_0$ is a Banach contraction mapping. Now, from lemma \ref{b}, we can say $A_0$ is closed. So, $A_0$ is a complete metric space. Then, by Banach contraction principle the mapping $S_1$ has an unique fixed point $z\in A_0.$ Now, $d(z,T(z))=d(S(z),T(z))=d(A,B).$ This shows that $z$ is a best proximity point for $T.$ Uniqueness follows from the definition of $p$-proximal contraction. Also, we can conclude that for any $x_0\in A_0$ the sequence $\{S_1^{n}(x_0)\}$ will converge to the unique best proximity point of $T.$
\end{proof}

\section{conclusion}
The main motivation of the current paper is that if the $p$-proximal contraction constant $0<k<\frac{1}{3}$ then the best proximity point theorem by Altun \cite{AL} follows from the Banach contraction principle. If $\frac{1}{3}\leq k<1$ then the best proximity point theorem by Altun \cite{AL} follows from Theorem \ref{c} which is already shown by Gabeleh and Markin in \cite{GB}.

\end{document}